    \newcommand{\dom}{\mbox{\rm dom}}
    \newcommand{\thzfc}{\mathrm{ZFC}}
    \newcommand{\Ewf}{\mathcal{E}}
    \newcommand{\Iwf}{\mathcal{I}}
    \newcommand{\Mwf}{\mathcal{M}}
    \newcommand{\Pwf}{\mathcal{P}}
    \newcommand{\afrak}{\mathfrak{a}}
    \newcommand{\bfrak}{\mathfrak{b}}
    \newcommand{\cfrak}{\mathfrak{c}}
    \newcommand{\dfrak}{\mathfrak{d}}
    \newcommand{\sfrak}{\mathfrak{s}}
    \newcommand{\menos}{\smallsetminus}
    \newcommand{\frestr}{\!\!\upharpoonright\!\!}
    \newcommand{\cov}{\mbox{\rm cov}}
    \newcommand{\non}{\mbox{\rm non}}
    \newcommand{\Eor}{\mathds{E}}
    \newcommand{\Por}{\mathds{P}}
    \newcommand{\Qor}{\mathds{Q}}
    \newcommand{\Sor}{\mathds{S}}
    \newcommand{\Qnm}{\dot{\mathds{Q}}}
    \newcommand{\Snm}{\dot{\mathds{S}}}
    \newcommand{\R}{\mathbb{R}}
    \newcommand{\sii}{{\ \mbox{$\Leftrightarrow$} \ }}
\title{Borel computation of names in template iterations}
\author{Diego A. Mej\'ia\thanks{Supported by the Austrian Science Fund (FWF) P23875-N13 and I1272-N25}
}
\date{\small Institute of Discrete Mathematics and Geometry\\ Vienna University of Technology\\ Vienna, Austria.\\ \ \\ \texttt{diego.guzman@tuwien.ac.at}}
\begin{document}

\makeatletter
\def\@roman#1{\romannumeral #1}
\makeatother

\theoremstyle{plain}
  \newtheorem{theorem}{Theorem}[section]
  \newtheorem{corollary}[theorem]{Corollary}
  \newtheorem{lemma}[theorem]{Lemma}
  \newtheorem{prop}[theorem]{Proposition}
  \newtheorem{clm}[theorem]{Claim}
  \newtheorem{exer}[theorem]{Exercise}
  \newtheorem{question}[theorem]{Question}
\theoremstyle{definition}
  \newtheorem{definition}[theorem]{Definition}
  \newtheorem{example}[theorem]{Example}
  \newtheorem{remark}[theorem]{Remark}
  \newtheorem{notation}[theorem]{Notation}
  \newtheorem{context}[theorem]{Context}
  \newtheorem*{acknowledgements}{Acknowledgements}

\maketitle

\begin{abstract}
   We prove that, for a suitable iteration $\Por$ along a template $\langle L,\bar{\Iwf}\rangle$, we can compute any $\Por$-name for a real from a Borel function coded in the ground model evaluated at only countably many of the generic reals.
\end{abstract}

\section{Introduction}\label{SecIntro}

Consider the following class of definable ccc posets (see examples in Section \ref{SecBorelccc}).

\begin{definition}\label{DefcccBorel}
   A poset $\Sor$ is \emph{ccc Borel} if it is ccc, the relations $\leq_{\Sor}$ and $\perp_{\Sor}$ are Borel, it adds a (generic) real $\dot{\eta}$ and there is a Borel relation $E\subseteq\omega^\omega\times\omega^\omega$ such that,
   \begin{enumerate}[(i)]
      \item $E(z,\mathds{1}_{\Sor})$ is true for any real $z$ and
      \item in any $\Sor$-extension, $p\in\Sor$ is in the generic filter iff $E(\dot{\eta},p)$.
   \end{enumerate}
   A subposet $\Qor$ of $\Sor$ is \emph{nice} if $\Qor=\Sor^M$ for some transitive model $M$ of (a large fragment of) $\thzfc$ that contains $\omega_1$, $\dot{\eta}$ and the parameters of $\Sor$ and $E$.
\end{definition}

It is very common to use finite support iterations of nice subposets of Borel ccc posets (and also of quite small ccc posets) to obtain models where many cardinal invariants assume different values (see, for example, \cite{Br-Cichon}, \cite{JuSh-KunenMillerchart}, \cite{Me-MatIt} and \cite{Me-Matit02}). In \cite{Left Cichon}, the same technique is used to prove the consistency of $\bfrak<\non(\Mwf)<\cov(\Mwf)$ but, as it is hard to preserve unbounded families while using nice subposets of $\Eor$ (used to increase $\non(\Mwf)$, see Example \ref{ExpBorellinked}), new ideas like a construction of chains of ultrafilters had to be introduced to guarantee that $\leq^*$-increasing unbounded families in the ground model are preserved through the iteration. Here, it is necessary to code countable delta systems of conditions in the iteration without complete knowledge of what the iteration would be, that is, the code of these delta system can be interpreted once the iteration is constructed. This coding is possible because names of reals can be coded by Borel functions, as illustrated in the following fact.

\begin{theorem}[{\cite{Left Cichon}}]\label{fsiBorelComp}
   Let $\Por=\langle\Por_\alpha,\Qnm_\alpha\rangle_{\alpha<\delta}$ be a finite support iteration, $\delta=B\cup C$ disjoint union such that, for $\alpha\in B$, $\Qnm_\alpha$ is a $\Por_\alpha$-name of a nice subposet of a Borel ccc poset coded in the ground model and, for $\alpha\in C$, $\Qnm_\alpha$ is a $\Por_\alpha$-name of a ccc poset which domain, without loss of generality, is assumed to be an ordinal\footnote{In practice, these posets are small with respect to some fixed cardinal, this in order to have nice preservation properties for the iteration.}. If $\dot{x}$ is a $\Por$-name for a real, then there is a Borel function $F$ in the ground model such that $\Vdash\dot{x}=F(\langle\dot{\eta}_\alpha\rangle_{\alpha\in N})$ for some countable subset $N$ of $\delta$, where
   \begin{enumerate}[(i)]
      \item if $\alpha\in B\cap N$, $\dot{\eta}_\alpha$ is the name of the generic real added by $\Qnm_\alpha$ and
      \item if $\alpha\in C\cap N$, $\dot{\eta}_\alpha=\dot{\chi}_\alpha\frestr W_\alpha$ where $\dot{\chi}_\alpha$ is the characteristic function of the generic set added by $\Qnm_\alpha$ and $W_\alpha$ is a countable set, where $\langle W_\alpha\rangle_{\alpha\in C\cap N}$ belongs to the ground model.
   \end{enumerate}
\end{theorem}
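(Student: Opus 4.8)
The plan is to prove, by induction on the length $\delta$ of the iteration, two statements simultaneously: (A) every $\Por_\delta$-name $\dot x$ for a real is of the form $F(\langle\dot\eta_\alpha\rangle_{\alpha\in N})$ for a ground-model Borel $F$ and a countable $N\subseteq\delta$ exactly as in the conclusion; and (B) for every condition $q\in\Por_\delta$ there are a ground-model Borel set $B_q$ and a countable $N_q\subseteq\delta$ with $\Vdash(q\in\dot{G}_\delta\sii\langle\dot\eta_\alpha\rangle_{\alpha\in N_q}\in B_q)$, where again the $\dot\eta_\alpha$ have the stated form and $\langle W_\alpha\rangle_{\alpha\in C\cap N_q}$ lies in the ground model. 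The case $\delta=0$ is trivial. Two standard facts are used freely: a finite support iteration of ccc posets is ccc, so every name for a real has a \emph{nice} name involving only countably many conditions, each of finite support; and in such an iteration $q\in\dot{G}_\delta$ iff $q(\alpha)^{\dot{G}_\alpha}\in\dot{G}'_\alpha$ for each $\alpha$ in the finite support of $q$, where $\dot{G}_\alpha=\dot{G}_\delta\cap\Por_\alpha$, $\dot{G}'_\alpha$ is the $\alpha$-th iterand's generic, and $q(\alpha)^{\dot{G}_\alpha}$ is the interpretation of the name $q(\alpha)$.

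First I would carry out the inductive step for (B). Given $q\in\Por_\delta$, write ``$q\in\dot{G}_\delta$'' as the finite conjunction over $\alpha\in\mathrm{supp}(q)$ of ``$q(\alpha)^{\dot{G}_\alpha}\in\dot{G}'_\alpha$'' and analyse each conjunct. If $\alpha\in B$, then $q(\alpha)$ is a $\Por_\alpha$-name for an element of the ground-model-coded Borel ccc poset $\Sor_\alpha\subseteq\omega^\omega$, hence a $\Por_\alpha$-name for a real; by (A) at $\alpha$, $q(\alpha)^{\dot{G}_\alpha}=F_{q,\alpha}(\langle\dot\eta_\gamma\rangle_{\gamma\in N_{q,\alpha}})$, and by the defining property of ccc Borel posets — which transfers from $\Sor_\alpha$ to the nice subposet $\Qor_\alpha=\Sor_\alpha^{M_\alpha}$ since $M_\alpha$ contains $\omega_1$, $\dot\eta_\alpha$ and the relevant parameters, models a large fragment of $\thzfc$, and the Borel relation $E_\alpha$ is absolute — the conjunct is equivalent to $E_\alpha(\dot\eta_\alpha,F_{q,\alpha}(\langle\dot\eta_\gamma\rangle_{\gamma\in N_{q,\alpha}}))$, a Borel condition on $\langle\dot\eta_\gamma\rangle_{\gamma\in N_{q,\alpha}\cup\{\alpha\}}$. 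If $\alpha\in C$, choose a nice name for $q(\alpha)$ as a name for an ordinal below $\theta_\alpha$ (a ground-model ordinal bounding the domain, as in the footnote) whose deciding antichain is a single countable maximal antichain $A_{q,\alpha}\subseteq\Por_\alpha$; the set $R_{q,\alpha}\subseteq\theta_\alpha$ of values forced by members of $A_{q,\alpha}$ is a countable \emph{ground-model} set, and $q(\alpha)^{\dot{G}_\alpha}\in R_{q,\alpha}$ is forced. By (B) at $\alpha$, for each $q'\in A_{q,\alpha}$ the event $q'\in\dot{G}_\alpha$ is Borel in the generics, hence so is the event $q(\alpha)^{\dot{G}_\alpha}=\xi$ for each $\xi\in R_{q,\alpha}$; and since ``$q(\alpha)^{\dot{G}_\alpha}\in\dot{G}'_\alpha$'' just asks whether $\dot\chi_\alpha$ takes value $1$ at that $\xi$, the conjunct is Borel in the generics provided $R_{q,\alpha}\subseteq W_\alpha$, where $\dot\eta_\alpha=\dot\chi_\alpha\frestr W_\alpha$. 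Taking $N_q$ to be the (countable) union of $\mathrm{supp}(q)$ with all the countable index sets that appeared, throwing every $R_{q,\alpha}$ and every $W_\gamma$ handed back by the invocations of (A) and (B) into the corresponding $W$, and letting $B_q$ be the resulting finite Boolean combination, completes the step: enlarging the $W$'s only makes the Borel computations easier, and the assembled family $\langle W_\alpha\rangle$ stays in the ground model because everything entering the construction does.

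The inductive step for (A) is then short: fix a nice name for $\dot x$ using a countable set $D$ of conditions organised into maximal antichains $\langle B_n\rangle_{n<\omega}$ deciding $\dot x(n)$. Since generically exactly one member of each $B_n$ lies in $\dot{G}_\delta$, $\dot x$ is a ground-model Borel function of the point $\langle b_q\rangle_{q\in D}\in 2^D$ recording which $q\in D$ belong to $\dot{G}_\delta$; composing with the Borel maps obtained from step (B) applied to each of the countably many $q\in D$ yields $\dot x=F(\langle\dot\eta_\gamma\rangle_{\gamma\in N})$ with $N$ the countable union of the $N_q$. As before, define each $W_\alpha$ ($\alpha\in C\cap N$) as the union of all the countable ground-model sets contributed at coordinate $\alpha$; this is countable, and $\langle W_\alpha\rangle_{\alpha\in C\cap N}$ belongs to the ground model. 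Implicit throughout is that the recursion closes off: each coordinate touched contributes only countably many further coordinates, each strictly below it, so every branch is finite and only countably many coordinates are ever touched.

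The work concentrates in two places. The genuinely delicate part is the accounting that keeps $N$ and the family $\langle W_\alpha\rangle$ countable and inside the ground model — in particular, one must know that nice names for ordinal-valued names over a ccc forcing have ground-model countable ranges, and one must check that no step secretly requires evaluating an arbitrary Borel code (which is not a Borel operation); this is avoided precisely because $\dot x$, the chosen nice names, the antichains, and the Borel codes returned by the inductive hypothesis are all ground-model objects, so the whole construction is an honest recursion carried out in the ground model. The second, more routine, point is verifying that property (ii) of Definition \ref{DefcccBorel} passes from a Borel ccc poset to its nice subposets, using that $M$ contains $\omega_1$, $\dot\eta$ and the parameters and models enough of $\thzfc$, together with the absoluteness of the Borel relations $E$, $\leq_{\Sor}$ and $\perp_{\Sor}$.
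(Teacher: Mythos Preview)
Your proposal is correct and follows essentially the same approach the paper uses. Note that the paper does not give a separate proof of this theorem (it is cited from \cite{Left Cichon}); instead it proves the template generalization, Theorem~\ref{MainThm}, whose argument specializes to yours: a simultaneous recursion producing (B) a Borel relation $\Ewf$ witnessing ``$p\in\dot G$'' from the generics at the coordinates in a countable history set (using $E_\alpha$ at $B$-coordinates via Corollary~\ref{Nicesubposet}, and the characteristic function restricted to a countable $W_\alpha$ at $C$-coordinates), and then (A) the Borel function $F_{\dot x}$ built from $\Ewf$ applied to the conditions in a nice name. The only cosmetic difference is that the paper first isolates the history data $H(p)$, $\overline W(p)$ as standalone recursive definitions (Definition~\ref{DefHistory}, Lemmas~\ref{HistoryWellDef}--\ref{CtbleHistory}) before building $\Ewf$ and $F$, whereas you accumulate $N$ and the $W_\alpha$'s inside the induction; for finite support iterations along an ordinal this bookkeeping is straightforward, so your packaging is entirely adequate.
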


The main objective of this text is to extend this coding of names by Borel functions to the context of iterations along a template. This is possible by considering template iterations that alternates between nice subposets of Borel $\sigma$-linked posets (some of them correctness-preserving, see Definition \ref{DefCorrPres}), coded in the ground model, and arbitrary $\sigma$-linked posets (which in practice, are quite small). We are going to call these \emph{simple template iterations} (see Definition \ref{DefSimpleiteration} for details). The main result is stated in detail in Theorem \ref{MainThm}.

The theory of template iterations was originally introduced Shelah \cite{Sh-TempIt} to construct a model of $\aleph_1<\dfrak<\afrak$. Further applications and generalizations of the template iteration theory are presented, for example, in \cite{Br-TempIt,Br-CtbleCof, Br-Luminy}, \cite{Me-TempIt}, \cite{MaxCofGr} and \cite{Fischer-Mejia}. Our notation about template iterations corresponds to \cite{Me-TempIt}.

\begin{acknowledgements}
   This paper was motivated from the talk ``$\sfrak\bfrak\afrak$" (joint work with V. Fischer \cite{Fischer-Mejia}) that the author contributed to the RIMS 2014 Workshop on Infinitary Combinatorics in Set Theory and Its Applications. The author is deeply thankful with T. Usuba for organizing such a wonderful conference.
\end{acknowledgements}

\section{Simple template iterations}\label{SecBorelccc}

In this section, we want to define the type of iterations we are interested in for the main result, which we call simple (template) iterations.

\begin{notation}\label{NotationNames}
   Given a ccc poset $\Por$, without loss of generality, we assume that any $\Por$-name $\dot{x}$ for a real is of the form $\langle h_n^{\dot{x}},A_n^{\dot{x}}\rangle_{n<\omega}$ where, for each $n<\omega$, $A_n=A_n^{\dot{x}}$ is a maximal antichain in $\Por$, $h_n=h_n^{\dot{x}}:A_n\to\omega$ and each $p\in A_n$ decides $\dot{x}(n)$ to be $h_n(p)$.
\end{notation}

\begin{lemma}\label{forceSimga1-2}
   Let $\Sor$ be a Suslin ccc poset. If $\varphi(z)$ is a $\boldsymbol{\Sigma}^1_1$-statement of reals and $\dot{x}$ is a $\Sor$-name for a real, then the statement ``$p\Vdash\varphi(\dot{x})$" is $\boldsymbol{\Sigma}^1_2$. On the other hand, if $\varphi(z)$ is a $\boldsymbol{\Pi}^1_1$-statement of reals, then ``$p\Vdash\varphi(\dot{x})$" is $\boldsymbol{\Pi}^1_2$.
\end{lemma}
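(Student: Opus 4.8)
The plan is to express ``$p\Vdash\varphi(\dot{x})$'' with a single existential real quantifier ranging over a countable — hence real-coded — object, namely a witnessing name obtained from the maximal principle together with the ccc-ness of $\Sor$, and then to track how the first-order complexity of the forcing relation sits over the $\boldsymbol{\Sigma}^1_1$ complexity of $\Sor$. I will use freely: that, $\Sor$ being Suslin, its domain, $\leq_{\Sor}$ and $\perp_{\Sor}$ are $\boldsymbol{\Sigma}^1_1$ (the parameters of $\Sor$ absorbed into the boldface); that $\boldsymbol{\Sigma}^1_1$ and $\boldsymbol{\Pi}^1_1$ are closed under quantification over $\omega$ and under countable conjunctions and disjunctions, with $\boldsymbol{\Sigma}^1_1$ moreover closed under $\exists$ over reals and $\boldsymbol{\Pi}^1_1$ under $\forall$ over reals; and that a conjunction of a $\boldsymbol{\Sigma}^1_1$ with a $\boldsymbol{\Pi}^1_1$ predicate is both $\boldsymbol{\Sigma}^1_2$ and $\boldsymbol{\Pi}^1_2$ (pull the two hidden real quantifiers to the front, legitimately since they bind disjoint witness variables), so that prefixing such a conjunction with a single $\exists$ over reals again yields a $\boldsymbol{\Sigma}^1_2$ predicate. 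Two basic unfoldings: writing a $\Sor$-name $\dot{x}$ for a real as $\langle h_n^{\dot{x}},A_n^{\dot{x}}\rangle_{n<\omega}$ as in Notation \ref{NotationNames} and enumerating the antichain $A_i^{\dot{x}}$ (countable, since $\Sor$ is ccc) as $\{q^i_k:k<\omega\}$, the statement ``$p\Vdash\dot{x}(i)=m$'' is equivalent to ``$\forall k\,(q^i_k\perp_{\Sor}p\ \vee\ h_i^{\dot{x}}(q^i_k)=m)$'' and so is $\boldsymbol{\Sigma}^1_1$ in the codes; hence ``$p\Vdash\dot{x}\restriction n=s$'', a finite conjunction of such, is $\boldsymbol{\Sigma}^1_1$ as well. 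And ``the real $r$ codes a $\Sor$-name for a real'' is $\boldsymbol{\Sigma}^1_1\wedge\boldsymbol{\Pi}^1_1$, since ``the listed conditions belong to $\Sor$ and form an antichain'' is $\boldsymbol{\Sigma}^1_1$ while ``each listed antichain is predense in $\Sor$'' is $\boldsymbol{\Pi}^1_1$.

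For the first assertion, put $\varphi$ in tree normal form: $\varphi(z)\equiv\exists y\,\forall n\,(z\restriction n,y\restriction n)\in T$ for a tree $T$ on $\omega\times\omega$ whose code carries the parameters of $\varphi$. If $p\Vdash\varphi(\dot{x})$, then by the maximal principle there is a $\Sor$-name $\tau$ with $p\Vdash\tau\in\omega^\omega\wedge\forall n\,(\dot{x}\restriction n,\tau\restriction n)\in T$; for each $n$, gluing a maximal antichain below $p$ deciding $\tau(n)$ with a maximal antichain of conditions incompatible with $p$ on which we force the value $0$ — both antichains countable, by ccc — turns $\tau$ into a name $\dot{y}=\langle h_n,A_n\rangle_{n<\omega}$ of the usual form, coded by a real $r$, with $p\Vdash\dot{y}=\tau$ and hence $p\Vdash\forall n\,(\dot{x}\restriction n,\dot{y}\restriction n)\in T$; conversely such an $r$ witnesses $p\Vdash\varphi(\dot{x})$. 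Since $p$ forces a countable conjunction iff it forces every conjunct, ``$p\Vdash\forall n\,(\dot{x}\restriction n,\dot{y}\restriction n)\in T$'' amounts to ``$\forall n\,\big(p\Vdash(\dot{x}\restriction n,\dot{y}\restriction n)\in T\big)$'', and the $n$-th clause unfolds to
\[
\forall s\,\forall t\,\Big[\big(\exists q\,(q\leq_{\Sor}p\ \wedge\ q\Vdash\dot{x}\restriction n=s\ \wedge\ q\Vdash\dot{y}\restriction n=t)\big)\ \longrightarrow\ (s,t)\in T\Big],
\]
which by the above unfoldings and the closure properties is $\boldsymbol{\Pi}^1_1$ in all codes; hence so is ``$p\Vdash\forall n\,(\dot{x}\restriction n,\dot{y}\restriction n)\in T$''. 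Conjoining this with the $\boldsymbol{\Sigma}^1_1\wedge\boldsymbol{\Pi}^1_1$ clause ``$r$ codes a $\Sor$-name for a real'' and prefixing $\exists r$, the statement ``$p\Vdash\varphi(\dot{x})$'' takes the form $\exists r\,(\boldsymbol{\Sigma}^1_1\wedge\boldsymbol{\Pi}^1_1)$, which is $\boldsymbol{\Sigma}^1_2$.

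For the second assertion, let $\varphi$ be $\boldsymbol{\Pi}^1_1$, so that $\neg\varphi$ is $\boldsymbol{\Sigma}^1_1$. Since below any condition not forcing $\varphi(\dot{x})$ there is one forcing $\neg\varphi(\dot{x})$, we have $p\Vdash\varphi(\dot{x})$ iff $\neg\,\exists q\,\big(q\leq_{\Sor}p\ \wedge\ q\Vdash\neg\varphi(\dot{x})\big)$. By the first assertion ``$q\Vdash\neg\varphi(\dot{x})$'' is $\boldsymbol{\Sigma}^1_2$, and ``$q\leq_{\Sor}p$'' is $\boldsymbol{\Sigma}^1_1\subseteq\boldsymbol{\Sigma}^1_2$, so the matrix is $\boldsymbol{\Sigma}^1_2$; hence $\exists q\,(\cdots)$ is $\boldsymbol{\Sigma}^1_2$ and its negation ``$p\Vdash\varphi(\dot{x})$'' is $\boldsymbol{\Pi}^1_2$.

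The step I expect to be the main obstacle is keeping the count at the second rather than the third projective level, which hinges on two points. First, ccc is genuinely needed to ensure the witnessing name $\dot{y}$ is coded by a \emph{single} real, so that only one extra real quantifier is spent; without it the antichains need not be countable and the coding breaks down. Second, one must observe that a $\boldsymbol{\Sigma}^1_1$-clause conjoined with a $\boldsymbol{\Pi}^1_1$-clause becomes $\boldsymbol{\Sigma}^1_2$ once placed under a single $\exists$ over reals; read naively — with the $\boldsymbol{\Pi}^1_1$ forcing clause behind the $\boldsymbol{\Pi}^1_1$ ``$r$ codes a name'' clause, all behind $\exists r$ — the estimate would only be $\boldsymbol{\Sigma}^1_3$. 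The remaining ingredients — the tree normal form, the maximal principle, and the routine unfoldings of the forcing relation over the real-coded antichain data — are standard.
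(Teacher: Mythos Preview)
Your proof is correct and follows essentially the same route as the paper: put $\varphi$ in tree normal form, use the maximal principle plus ccc to replace the inner existential by a single real-coded $\Sor$-name $\dot{y}$, and compute that ``$p\Vdash(\dot{x},\dot{y})\in[T]$'' together with ``$\dot{y}$ is a name'' lands in $\boldsymbol{\Sigma}^1_1\wedge\boldsymbol{\Pi}^1_1$, so that the outer $\exists$ gives $\boldsymbol{\Sigma}^1_2$. The only cosmetic difference is in the $\boldsymbol{\Pi}^1_1$ case: the paper hints at the symmetric direct computation via ``$p\Vdash\dot{x}\notin[T]$ is $\boldsymbol{\Pi}^1_2$'' and a universal over names, whereas you reduce to the first part through $p\Vdash\varphi(\dot{x})\iff\neg\exists q\leq p\,(q\Vdash\neg\varphi(\dot{x}))$; both are standard and yield the same bound.
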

\begin{proof}
   We first prove that, if $T\subseteq\omega^{<\omega}$ is a tree, then the statement ``$p\Vdash\dot{x}\in[T]$" is $\boldsymbol{\Sigma}^1_1\boldsymbol{\cup}\boldsymbol{\Pi}^1_1$ (the smallest $\sigma$-algebra containing both $\boldsymbol{\Sigma}^1_1$ and $\boldsymbol{\Pi}^1_1$). As in Notation \ref{NotationNames}, $\dot{x}=\langle h,A_n\rangle_{n<\omega}$ were $A_n=\{q_{n,i}\ /\ i<|A_n|\}$ is countable and $h_n:|A_n|\to\omega$ (in the sense that $q_{n,i}$ decides $\dot{x}(n)=h_n(i)$), so $\dot{x}$ can be seen as a real itself. Therefore, ``$\dot{x}$ is a $\Sor$-name for a real" is a $\boldsymbol{\Sigma}^1_1\boldsymbol{\cup}\boldsymbol{\Pi}^1_1$-statement (it is just $\boldsymbol{\Pi}^1_1$ if $\Sor$ is Borel ccc). Now, notice that $p\Vdash\dot{x}\frestr k\in T$ iff $p\in\Sor$, $\dot{x}$ is a $\Sor$-name for a real and, for every $s\in\omega^k$, if $\{q_{i,s(i)}\ /\ i<k\}\cup\{p\}$ has a common stronger condition in $\Sor$, then $\langle h_i(s(i))\rangle_{i<k}\in T$, which is a $\boldsymbol{\Sigma}^1_1\boldsymbol{\cup}\boldsymbol{\Pi}^1_1$-statement (or just $\boldsymbol{\Pi}^1_1$ if $\Sor$ is Borel).

   Recall that an analytic statement is the projection of $[T]$ for some tree $T\subseteq(\omega\times\omega)^\omega$. Note that $p\Vdash\exists_y((\dot{x},y)\in[T])$ iff $p\in\Sor$, $\dot{x}$ is a $\Sor$-name for a real and there is a $\Sor$-name for a real $\dot{y}$ such that $p\Vdash(\dot{x},\dot{y})\in[T]$, which is clearly a $\boldsymbol{\Sigma}^1_2$-statement.

   The other affirmation is proven similarly (because $p\Vdash\dot{x}\notin[T]$ is $\boldsymbol{\Pi}^1_2$).
\end{proof}

As a consequence of this Lemma we have that the generic filter of any nice subposet of a Borel ccc poset is also well described by the Borel relation of the Borel poset, as shown in the following result.

\begin{corollary}\label{Nicesubposet}
   Let $\Sor$ be a Borel ccc poset as in Definition \ref{DefcccBorel} and $\Qor$ a nice subposet of $\Sor$. If $G$ is $\Qor$-generic over $V$ and $p\in\Qor$, then $p\in G$ iff $E(\dot{\eta},p)$.
\end{corollary}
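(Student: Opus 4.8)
The plan is to show that the statement ``$p \in G \Leftrightarrow E(\dot\eta, p)$'' holds in the $\Qor$-extension by transferring it, via absoluteness, from what we know holds in the larger poset $\Sor$. First I would observe that $\Qor = \Sor^M$ for a transitive model $M$ of a large enough fragment of $\thzfc$ containing $\omega_1$, $\dot\eta$, and the parameters of $\Sor$ and $E$, so inside $M$ the poset $\Sor^M$ is a Borel ccc poset witnessed by the same (absolute) Borel codes. Let $g$ be the $\Qor$-generic filter over $V$; since $M \subseteq V$ is transitive, $g$ is also $\Sor^M$-generic over $M$, and by item (ii) of Definition \ref{DefcccBorel} applied inside $M[g]$ we get that for $p \in \Qor$, $p \in g$ iff $M[g] \models E(\dot\eta, p)$, where here $\dot\eta$ denotes the generic real of $\Sor^M$ added by $g$ (which is the same real whether computed in $M[g]$ or $V[g]$, being read off from $g$ by an absolute Borel recipe).

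The remaining point is that ``$E(\dot\eta, p)$'' means the same thing in $M[g]$ and in $V[g] = V^{\Qor}$. Since $E$ is Borel, the statement $E(\dot\eta, p)$ is, up to complementation, $\boldsymbol\Sigma^1_1$ (indeed $\boldsymbol\Delta^1_1$ with its Borel code as a parameter), hence absolute between the transitive models $M[g]$ and $V[g]$ by Mostowski absoluteness; the Borel code for $E$ lies in $M$ and $\dot\eta, p$ lie in $M[g]$, so both computations refer to the same code evaluated at the same reals. Therefore $M[g] \models E(\dot\eta,p)$ iff $V[g] \models E(\dot\eta, p)$, and combining with the previous paragraph yields $p \in G$ iff $E(\dot\eta, p)$ in $V^{\Qor}$, as desired. (One should also note $E(\dot\eta, \mathds{1}_\Sor)$ is true outright by item (i), so the base case and the coherence with the full poset cause no trouble.)

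The main obstacle is not any single deep estimate but making precise that the $\Qor$-generic $g$ genuinely is $\Sor^M$-generic over $M$ and that the ``generic real'' $\dot\eta$ read from $g$ agrees in $M[g]$ and $V[g]$. The first is standard since $\Qor = \Sor^M$ as a forcing notion and dense subsets of $\Sor^M$ lying in $M$ are met by $g$; the second is where one uses Lemma \ref{forceSimga1-2} and the hypothesis that $M$ contains the parameters of $E$ and $\dot\eta$ — so that the Borel relation describing $\dot\eta$ in terms of $g$, and the relation $E$, are coded by the same parameters in both models and thus evaluated identically. Once these bookkeeping points are settled, the conclusion is immediate from Definition \ref{DefcccBorel}(ii) and Shoenfield/Mostowski absoluteness.
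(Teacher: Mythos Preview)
Your outline is the same as the paper's --- pass to $M$, use genericity over $M$, then pull the equivalence back up to $V[G]$ by absoluteness of the Borel relation $E$ --- but you misplace the one nontrivial step, and this leaves a gap.

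The assertion ``by item (ii) of Definition \ref{DefcccBorel} applied inside $M[g]$'' needs justification: item (ii) is a hypothesis about $\Sor$ in $V$, and it is not a priori clear that $M$ believes it for $\Sor^M$. This is exactly where Lemma \ref{forceSimga1-2} enters in the paper. One rewrites, for fixed $p$, the statement $\Vdash_{\Sor}\bigl(p\in\dot G\leftrightarrow E(\dot\eta,p)\bigr)$ as ``$p\Vdash E(\dot\eta,p)$ and $\forall q\in\Sor\bigl(q\Vdash E(\dot\eta,p)\to q\parallel p\bigr)$'', which by Lemma \ref{forceSimga1-2} is $\boldsymbol{\Pi}^1_2$ in $p$ (with the codes for $\Sor$, $E$, $\dot\eta$ as parameters). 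Since $\omega_1\in M$ and $M$ contains these parameters, Shoenfield absoluteness pushes this $\boldsymbol{\Pi}^1_2$ statement down from $V$ to $M$, so $M$ really does satisfy (ii) for $\Sor^M$. Only then may you apply (ii) in $M[g]$.

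By contrast, the places where you invoke Lemma \ref{forceSimga1-2} --- agreement of $\dot\eta$ and of $E(\dot\eta,p)$ between $M[g]$ and $V[g]$ --- do not need it: $\dot\eta\in M$ is evaluated identically by $g$ in either model, and $E$ is Borel, so $E(\eta[g],p)$ is absolute between transitive models containing the code and the arguments. So your final paragraph is correct but routine; the genuine use of Lemma \ref{forceSimga1-2} (and the reason $\omega_1\in M$ is part of the definition of ``nice'') is the downward transfer of the forcing statement itself.
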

\begin{proof}
  $\Vdash p\in\dot{G}\sii E(\dot{\eta},p)$ is equivalent to say that $p\Vdash E(\dot{\eta},p)$ and, for every $q\in\Sor$, if $q\Vdash E(\dot{\eta},p)$ then $q\parallel p$, which is a $\boldsymbol{\Pi}^1_2$-statement by Lemma \ref{forceSimga1-2}. So $\forall_{p\in\Sor}(\Vdash p\in\dot{G}\sii E(\dot{\eta},p))$ is also $\boldsymbol{\Pi}^1_2$.

   Now, let $M$ a transitive model of (a large fragment of) $\thzfc$ that contains $\omega_1$, $\dot{\eta}$ and the parameters of $\Sor$ and $E$, such that $\Qor=\Sor^M$. By the absoluteness of $\boldsymbol{\Pi}^1_2$-statements, $M\models\forall_{p\in\Sor}(\Vdash p\in\dot{G}\sii E(\dot{\eta},p))$. If $G$ is $\Qor$-generic over $V$, then it is $\Qor$-generic over $M$, so $M[G]\models``p\in G\sii E(\eta[G],p)"$ for any $p\in\Qor$. Therefore, as $E$ is a Borel relation, the equivalence $``p\in G\sii E(\eta[G],p)"$ is also true in $V[G]$.
\end{proof}

Definable posets that are involved in simple iterations should satisfy the following two notions.

\begin{definition}[{\cite{Br-Luminy}}]\label{DefBorelsigmalinked}
   A poset $\Sor$ is \emph{Borel $\sigma$-linked} if it is Borel ccc (see Definition \ref{DefcccBorel}) and there is a sequence $\{S_n\}_{n<\omega}$ of linked sets such that the statement ``$x\in S_n$" is Borel. In addition, if all those $S_n$ are centered, we say that $\Sor$ is \emph{Borel $\sigma$-centered}.
\end{definition}

\begin{definition}[{\cite{Br-Luminy}}]\label{DefCorrPres}
   \begin{enumerate}[(1)]
      \item A system of posets $\langle\Por_0,\Por_1,\Qor_0,\Qor_1\rangle$ is \emph{correct} if $\Por_i$ is a complete subposet of $\Qor_i$ for $i=0,1$, $\Por_0$ is a complete subposet of $\Por_1$, $\Qor_0$ is a complete subposet of $\Qor_1$ and, whenever $p\in\Por_0$ is a reduction of $q\in\Qor_0$, then $p$ is a reduction of $q$ with respect to $\Por_1,\Qor_1$.
      \item A Suslin ccc poset $\Sor$ is \emph{correctness-preserving} if, for any $\langle\Por_0,\Por_1,\Qor_0,\Qor_1\rangle$ as in (1), the system $\langle\Por_0\ast\Snm^{V^{\Por_0}},\Por_1\ast\Snm^{V^{\Por_1}},
          \Qor_0\ast\Snm^{V^{\Qor_0}},\Qor_1\ast\Snm^{V^{\Qor_1}}\rangle$ is correct.
   \end{enumerate}
\end{definition}

\begin{example}\label{ExpBorellinked}
   \begin{enumerate}[(1)]
      \item Consider $\Eor$ the canonical forcing that adds an eventually different real, that is, conditions are of the form $(s,F)\in\omega^{<\omega}\times[\omega^\omega]^{<\omega}$ and the order is given by $(s',F')\leq(s,F)$ iff $s\subseteq s'$, $F\subseteq F'$ and $s(i)\neq x(i)$ for all $x\in F$ and $i\in|s'|\menos|s|$. It is clear that this poset has a Borel definition. $\dot{e}=\bigcup\{s\ /\ \exists_F((s,F)\in\dot{G})\}$ is the name of the generic real and, with the closed-relation $E(z,(s,F))$ defined as ``$s\subseteq z$ and $\forall_{i\geq|s|}\forall_{x\in F}(z(i)\neq x(i))$", it is clear that $\Vdash``(s,F)\in\dot{G}\sii E(\dot{e},(s,F))"$, so $\Eor$ is Borel ccc. It is also clear that $\Eor$ is Borel $\sigma$-centered.
      \item Classical forcing notions like Cohen forcing and Hechler forcing are Borel $\sigma$-centered, while localization forcing and random forcing are Borel $\sigma$-linked.
      \item All the previous posets are correctness-preserving, due to Brendle \cite{Br-Luminy,Br-Shat} (see also \cite[Sect. 2]{Me-TempIt}).
   \end{enumerate}
\end{example}

\begin{definition}\label{DefSimpleiteration}
   Let $\langle L,\bar{\Iwf}\rangle$ be an indexed template. A \emph{simple (template) iteration $\Por\frestr\langle L,\bar{\Iwf}\rangle$} consists of the following components:
   \begin{enumerate}[(i)]
      \item $L=B\cup R\cup C$ as a disjoint union.
      \item For $x\in B\cup R$ let $\Sor_x$ be a Borel $\sigma$-linked correctness-preserving poset, where $E_x$ is its corresponding Borel relation and $\dot{\eta}_x$ the name of its generic real.
      \item For $x\in R$  fix $C_x\in\hat{\Iwf}_x$.
      \item For $x\in C$ fix an ordinal $\gamma_x$ and $C_x\in\hat{\Iwf}_x$.
   \end{enumerate}
   For $x\in L$ and $A\in\hat{\Iwf}_x$, $\Qnm^A_x$ (the $\Por\frestr A$-name of the poset used at coordinate $x$ of the iteration) is defined as follows.
   \begin{enumerate}[(i)]
      \setcounter{enumi}{4}
      \item If $x\in B$ then $\Qnm^A_x=\Sor_x^{V^{\Por\upharpoonright A}}$.
      \item If $x\in R$, fix $\Qnm_x$ a $\Por\frestr C_x$-name of a nice subposet of $\Sor_x^{V^{\Por\upharpoonright C_x}}$. $\Qnm^A_x=\Qnm_x$ if $C_x\subseteq A$, or it is the trivial poset otherwise.
      \item If $x\in C$, fix $\Qnm_x$ a $\Por\frestr C_x$-name of a $\sigma$-linked poset with domain $\gamma_x$. $\Qnm^A_x=\Qnm_x$ if $C_x\subseteq A$, or it is the trivial poset otherwise.
   \end{enumerate}
   For $x\in C$, denote by $\dot{\eta}_x$ the $\Por\frestr(C_x\cup\{x\})$-name of the characteristic function of the generic subset of $\Qnm_x$. Besides, for $B\in\hat{\Iwf}_x$, $\dot{\eta}^B_x$ denotes the $\Por\frestr(B\cup\{x\})$-name of the generic subset of $\Qnm^B_x$, so $\dot{\eta}^B_x=\dot{\eta}_x$ if $C_x\subseteq B$ or $\dot{\eta}^B_x=\{(0,1)\}$ otherwise.

   If $A\subseteq L$, define $\Por^*\frestr A$ as the set of conditions $p\in\Por\frestr A$ such that, for $x\in C\cap\dom p$, $p(x)$ is an ordinal in $\gamma_x$ (not just a name).
\end{definition}

\begin{remark}\label{RemonSmallposets}
  \begin{enumerate}[(1)]
   \item We could just ignore the ordinal in (iv) and state in (vii) that $\Qnm$ is a $\Por\frestr C_x$-name for a $\sigma$-linked poset. This is because, by ccc-ness, we can find an ordinal $\gamma_x$ such that $\Por\frestr C_x$ forces that $\Qnm$ is densely embedded into a poset with domain $\gamma_x$. In practice, these ordinals are meant to be small (in \cite{Left Cichon}, assuming $\kappa=\bfrak=\cfrak$ in the ground model, ``small" means ``of size $<\kappa$").

   \item It is easy to see that, in a simple iteration as in Definition \ref{DefSimpleiteration}, $\Por^*\frestr A$ is dense in $\Por\frestr A$ for all $A\subseteq L$. By induction on $\mathrm{Dp}(A)$: let $p\in\Por\frestr A$, $x=\max(\dom p)$, so there exists an $A'\in\Iwf_x\frestr A$ such that $p\frestr L_x\in\Por\frestr A'$ and $p(x)$ is a $\Por\frestr A'$-name for a condition in $\Qnm_x^{A'}$. Assume $x\in C$. If $C_x\subseteq A'$, get $p'\leq p\frestr L_x$ in $\Por\frestr A'$ and some $\xi<\gamma_x$ such that $p'\Vdash p(x)=\xi$. Now, find $q'\leq p'$ in $\Por^*\frestr A'$ (by induction hypothesis), so $q=q'\cup\{(x,\xi)\}$ is in $\Por^*\frestr A$ and it is stronger that $p$. On the other hand, if $C_x\nsubseteq A'$, then $p(x)$ is the trivial condition, which can be assumed to be 0, so this case is handled like before. The case $x\in B\cup R$ is also similar (and simpler).

   \item In Definition \ref{DefSimpleiteration}, we can add more conditions to $\Por^*\frestr A$ depending on the posets used at coordinates $x\in B\cup R$. For example, for such an $x$ where $\Sor_x=\Eor$, we could further assume that, if $x\in\dom p$, then $p(x)=(s,\dot{F})$ where $s$ and $|\dot{F}|$ are already decided. Again, we obtain that $\Por^*\frestr A$ is dense in $\Por\frestr A$.
  \end{enumerate}
\end{remark}

\section{Borel computation}\label{SecBorelComp}

Throughout this section, fix an indexed template $\langle L,\bar{\Iwf}\rangle$ and a simple iteration $\Por\frestr\langle L,\bar{\Iwf}\rangle$ as in Definition \ref{DefSimpleiteration}.

\begin{definition}\label{DefHistory}
   By recursion on $\mathrm{Dp}(A)$, define, for any $p\in\Por^*\frestr A$ and $\dot{x}=\langle h_n,A_n\rangle_{n<\omega}$ a $\Por^*\frestr A$-name for a real:
   \begin{enumerate}[(1)]
     \item $H^A(p)\subseteq A$ and $H^A(\dot{x})\subseteq A$ as follows:
         \begin{enumerate}[(i)]
           \item If $x=\max(\dom p)$, choose $A'\in\Iwf_x\frestr A$ such that $p\frestr L_x\in\Por^*\frestr A'$ and $p(x)$ is a $\Por^*\frestr A'$-name for a condition in $\Qnm^{A'}_x$. Put $H^A(p)=H^{A'}(p\frestr L_x)\cup H^{A'}(p(x))\cup\{x\}$, but ignore $H^{A'}(p(x))$ when $x\in C$. On the other hand, if $p=\langle\ \rangle$, put $H^A(p)=\varnothing$.
           \item $H^A(\dot{x})=\bigcup\{H^A(p)\ /\ p\in A_n,\ n<\omega\}$.
         \end{enumerate}
     \item Sequences $\overline{W}^A(p)\in\prod_{z\in H^A(p)\cap C}\Pwf(\gamma_z)$ and $\overline{W}^A(\dot{x})\in\prod_{z\in H^A(\dot{x})\cap C}\Pwf(\gamma_z)$ as follows:
         \begin{enumerate}[(i)]
            \item If $x=\max(\dom p)$, choose $A'\in\Iwf_x\frestr A$ as in (1)(i) and put, for $z\in (H^{A'}(p\frestr L_x)\cup H^{A'}(p(x)))\cap C$, $W^A(p)_z=W^{A'}(p\frestr L_x)_z\cup W^{A'}(p(x))_z$ (ignore undefined terms in this union) and, if $x\in C$, put $W^A(p)_x=\{p(x)\}$ (recall that the trivial condition in $\Qnm_x^{A'}$ is 0 for $x\in C$).
            \item $W^A(\dot{x})_z=\bigcup\{W^A(p)_z\ /\ z\in H^A(p)\cap C,\ p\in A_n,\ n<\omega\}$ for $z\in H^A(\dot{x})\cap C$.
         \end{enumerate}
   \end{enumerate}
\end{definition}

It is necessary to see that both functions $H^A(\cdot)$ and $\overline{W}^A(\cdot)$ are well defined. Moreover, they do not depend on $A$, as follows from the following result.

\begin{lemma}\label{HistoryWellDef}
   Let $A'\subseteq A\subseteq L$.
   \begin{enumerate}[(a)]
      \item If $p\in\Por^*\frestr A'$ then $H^{A'}(p)=H^{A}(p)$ and $\overline{W}^{A'}(p)=\overline{W}^A(p)$.
      \item If $\dot{x}$ is a $\Por^*\frestr A'$-name for a real, then $H^{A'}(\dot{x})=H^{A}(\dot{x})$ and $\overline{W}^{A'}(\dot{x})=\overline{W}^A(\dot{x})$.
   \end{enumerate}
\end{lemma}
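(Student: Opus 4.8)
The statement is that the "history" functions $H$ and $\overline{W}$ don't depend on the ambient index set $A$: as long as $p\in\Por^*\frestr A'$ (resp. $\dot x$ is a $\Por^*\frestr A'$-name) and $A'\subseteq A\subseteq L$, the objects computed inside $A'$ and inside $A$ agree. The plan is a straightforward induction on $\mathrm{Dp}(A)$ (or on $\mathrm{Dp}(A')$), proving (a) and (b) simultaneously, since (b) is obtained from (a) by the defining union over antichains $A_n^{\dot x}$ and the case for names reduces verbatim to the case for conditions. So the whole content is part (a).

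\begin{proof}
   It suffices to prove (a), since (b) follows immediately: if $\dot x=\langle h_n,A_n\rangle_{n<\omega}$ is a $\Por^*\frestr A'$-name, then each $A_n$ is a maximal antichain of $\Por^*\frestr A'$ and hence $A_n\subseteq\Por^*\frestr A'\subseteq\Por^*\frestr A$, so by Definition \ref{DefHistory}(1)(ii) and (2)(ii),
   \[
      H^{A'}(\dot x)=\bigcup_{n<\omega}\bigcup_{p\in A_n}H^{A'}(p)=\bigcup_{n<\omega}\bigcup_{p\in A_n}H^{A}(p)=H^{A}(\dot x),
   \]
   using (a) for each $p$, and similarly for $\overline{W}$.

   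We prove (a) by induction on $\mathrm{Dp}(A)$ (for all $A'\subseteq A$ simultaneously). If $p=\langle\ \rangle$, then $H^{A'}(p)=H^A(p)=\varnothing$ and the $\overline{W}$'s are the empty sequence, so assume $p\neq\langle\ \rangle$ and put $x=\max(\dom p)$. In the computation of $H^A(p)$ we choose some $A_1\in\Iwf_x\frestr A$ with $p\frestr L_x\in\Por^*\frestr A_1$ and $p(x)$ a $\Por^*\frestr A_1$-name; in the computation of $H^{A'}(p)$ we likewise choose some $A_0\in\Iwf_x\frestr A'$ with $p\frestr L_x\in\Por^*\frestr A_0$. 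Here I would invoke the usual "determined" property of indexed templates (the same fact used to show $H^A$ is well defined): the value $H^{A_1}(p\frestr L_x)$ does not depend on the particular choice of $A_1$, only on $p\frestr L_x$, and likewise for $H^{A_1}(p(x))$ and for the $\overline{W}$-analogues. Since $A'\subseteq A$ gives $\Iwf_x\frestr A'\subseteq\Iwf_x\frestr A$, the set $A_0$ is also an admissible choice on the $A$-side; so without loss of generality we may take $A_1=A_0=:A^*$, and then $A^*\subseteq A'\subseteq A$ with $\mathrm{Dp}(A^*)<\mathrm{Dp}(A)$.

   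Now the induction hypothesis applies to $A^*$: since $p\frestr L_x\in\Por^*\frestr A^*$ we get $H^{A^*}(p\frestr L_x)$ is the same whether computed inside $A^*$, inside $A'$, or inside $A$ — but it is literally the term $H^{A^*}(p\frestr L_x)$ appearing in both defining clauses, so the two clauses for $H^{A'}(p)$ and $H^A(p)$ have the same first summand. For the middle summand: if $x\in B\cup R$, then $p(x)$ is a $\Por^*\frestr A^*$-name for a condition in $\Qnm^{A^*}_x$, and since $\Por^*\frestr A^*$ is a poset of depth $<\mathrm{Dp}(A)$ the induction hypothesis (applied through part (b), already reduced to (a) at lower depth) gives that $H^{A^*}(p(x))$ is ambient-independent as well; if $x\in C$ this summand is ignored on both sides. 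The final summand $\{x\}$ is common. Hence $H^{A'}(p)=H^{A}(p)$. The computation for $\overline{W}^{A'}(p)=\overline{W}^{A}(p)$ is identical term by term: the components indexed by $z\in(H^{A^*}(p\frestr L_x)\cup H^{A^*}(p(x)))\cap C$ are given by the same recursive expressions, and the extra component $W^A(p)_x=\{p(x)\}=W^{A'}(p)_x$ when $x\in C$ does not involve the ambient set at all.

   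\textbf{The main point} is really the appeal in the previous paragraph to the choice-independence of the template computation together with the monotonicity $\Iwf_x\frestr A'\subseteq\Iwf_x\frestr A$ — this is what lets us synchronize the two recursions by picking the \emph{same} witness $A^*$ on both sides, after which everything collapses to the induction hypothesis at strictly smaller depth. There are no genuine calculations; once the witnesses are synchronized the two clauses in Definition \ref{DefHistory} are syntactically the same expression.
\end{proof}
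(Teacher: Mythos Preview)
Your proof is correct and follows essentially the same inductive scheme as the paper: induction on $\mathrm{Dp}(A)$, handle the trivial condition, then peel off $x=\max(\dom p)$ and compare the recursive clauses. The one cosmetic difference is in how the two witnesses are reconciled. You observe that $\Iwf_x\frestr A'\subseteq\Iwf_x\frestr A$ (using that $\Iwf_x$ is downward closed), so the witness $A_0$ chosen on the $A'$-side is already admissible on the $A$-side, and after synchronising the witnesses the two defining clauses are literally identical. The paper instead keeps the $A'$-witness $K'$ and passes to a larger $K\in\Iwf_x\frestr A$ containing it, then applies the induction hypothesis to the pair $K'\subseteq K$ (both of depth $<\mathrm{Dp}(A)$) to equate $H^{K'}(\cdot)$ with $H^{K}(\cdot)$. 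Your route is marginally shorter; the paper's route makes it a bit more transparent that well-definedness of $H^A$ at the current depth is itself being manufactured by the induction (any two witnesses have a common upper bound, and the hypothesis identifies their values), which is exactly the ``determined'' property you invoke. Either way the content is the same.
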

\begin{proof}
   We prove both (a) and (b) simultaneously by induction on $\mathrm{Dp}(A)$. Let $A'\subseteq A$ and $p\in\Por^*\frestr A'$. If $p=\langle\ \rangle$, clearly $H^{A'}(p)=H^{A}(p)$ and $\overline{W}^{A'}(p)=\overline{W}^A(p)$, so assume that $p\neq\langle\ \rangle$ and let $x=\max(\dom p)$. Then, there exists $K'\in\Iwf_x\frestr A'$ such that $p\frestr L_x\in\Por^*\frestr K'$ and $p(x)$ is a $\Por^*\frestr K'$-name for a condition in $\Qnm_x^{K'}$. Clearly, there is a $K\in\Iwf_x\frestr A$ containing $K'$ so, by induction hypothesis, $H^{K'}(p\frestr L_x)\cup H^{K'}(p(x))=H^K(p\frestr L_x)\cup H^K(p(x))$ and, for $z$ in this set and in $C$, $W^{K'}(p\frestr L_x)_z\cup W^{K'}(p(x))_z=W^K(p\frestr L_x)_z\cup W^K(p(x))_z$ (ignore undefined objects). Therefore, $H^{A'}(p)=H^A(p)$ and $\overline{W}^{A'}(p)=\overline{W}^A(p)$.

   If $\dot{x}$ is a $\Por^*\frestr A'$-name for a real, $H^{A'}(\dot{x})=H^A(\dot{x})$ and $\overline{W}^{A'}(\dot{x})=\overline{W}^A(\dot{x})$ follow straightforward.
\end{proof}

Lemma \ref{HistoryWellDef} allows us to denote $H(\cdot)=H^L(\cdot)$ and $\overline{W}(\cdot)=\overline{W}^L(\cdot)$. The intension of these two functions, which is materialized in Theorem \ref{MainThm} is that any $p\in\Por^*\frestr L$ can be reconstructed from the generic objects added at stages $x\in H(p)$ in the iteration and, for $z\in H(p)\cap C$, $p$ only depends on the information given by the set $W(p)_z$. Therefore, the same applies for $\Por^*\frestr L$-names for reals, which allows to define a Borel function in the ground model that determines $\dot{x}$ when it is evaluated at the generic reals from $H(\dot{x})$ where, for $z\in H(\dot{x})\cap C$, it is only needed to look at $W(\dot{x})_z$ intersected the generic set added at $z$. All these information from where conditions and names depend are countable, which is easily proved by induction on $\mathrm{Dp}(A)$ for $A\subseteq L$.

\begin{lemma}\label{CtbleHistory}
   For each $p\in\Por^*\frestr L$, $H(p)$ is a countable subset of $L$ and, for each $z\in H(p)\cap C$, $W(p)_z$ is a countable subset of $\gamma_z$. The same applies to $\Por^*\frestr L$-names for reals.
\end{lemma}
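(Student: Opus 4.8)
The plan is to prove both assertions of Lemma \ref{CtbleHistory} simultaneously, by induction on $\mathrm{Dp}(A)$, in the following uniform form: for every $A\subseteq L$ and every $p\in\Por^*\frestr A$, the set $H^A(p)$ is countable and $W^A(p)_z$ is a countable subset of $\gamma_z$ for each $z\in H^A(p)\cap C$; and likewise for every $\Por^*\frestr A$-name $\dot x$ for a real. By Lemma \ref{HistoryWellDef} this specializes to $A=L$, which is the statement we want. At a fixed value of $\mathrm{Dp}(A)$ I would first dispose of the claim for conditions and only afterwards of the claim for names, because the recursion defining $H^A(\dot x)$ and $\overline{W}^A(\dot x)$ refers to $H^A(p)$, $\overline{W}^A(p)$ for conditions $p\in\bigcup_n A_n\subseteq\Por^*\frestr A$ at the \emph{same} depth, whereas the recursion defining $H^A(p)$, $\overline{W}^A(p)$ descends only to sets $A'\in\Iwf_x\frestr A$, which have strictly smaller depth; so this ordering makes the recursion well founded.

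For a condition $p$: if $p=\langle\ \rangle$ then $H^A(p)=\varnothing$ and there is nothing to do (this also covers the base of the induction). Otherwise let $x=\max(\dom p)$ and choose $A'\in\Iwf_x\frestr A$ as in Definition \ref{DefHistory}(1)(i), so $\mathrm{Dp}(A')<\mathrm{Dp}(A)$, $p\frestr L_x\in\Por^*\frestr A'$, and $p(x)$ is a $\Por^*\frestr A'$-name for a condition in $\Qnm^{A'}_x$. The inductive hypothesis applied to $p\frestr L_x$ gives that $H^{A'}(p\frestr L_x)$ is countable with all $W^{A'}(p\frestr L_x)_z$ countable. If $x\in B\cup R$ then conditions of $\Qnm^{A'}_x$ are reals, so after normalising as in Notation \ref{NotationNames} the object $p(x)$ is a $\Por^*\frestr A'$-name for a real, and the inductive hypothesis for names gives that $H^{A'}(p(x))$ and all $W^{A'}(p(x))_z$ are countable; hence $H^A(p)=H^{A'}(p\frestr L_x)\cup H^{A'}(p(x))\cup\{x\}$ is a union of two countable sets and a singleton, so countable, and each $W^A(p)_z$ is a union of at most two countable sets. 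If $x\in C$ then $H^{A'}(p(x))$ is discarded and $W^A(p)_x=\{p(x)\}$ with $p(x)\in\gamma_x$ because $p\in\Por^*\frestr A$; again $H^A(p)$ is countable and $W^A(p)_x$ is a one-element subset of $\gamma_x$, the remaining coordinates being covered by the inductive hypothesis.

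For a $\Por^*\frestr A$-name $\dot x=\langle h_n,A_n\rangle_{n<\omega}$ for a real: each $A_n$ is a maximal antichain of $\Por^*\frestr A$, and since $\Por^*\frestr A$ is dense in $\Por\frestr A$ by Remark \ref{RemonSmallposets}(2), an antichain of $\Por^*\frestr A$ is an antichain of $\Por\frestr A$, which is ccc; hence each $A_n$ is countable. By the case for conditions already treated at this depth, $H^A(p)$ is countable for each $p\in\bigcup_{n<\omega} A_n$, and therefore $H^A(\dot x)=\bigcup\{H^A(p)\ /\ p\in A_n,\ n<\omega\}$ is a countable union of countable sets, hence countable; the same bookkeeping shows that each $W^A(\dot x)_z$ is a countable union of countable subsets of $\gamma_z$, hence countable.

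I do not anticipate a genuine obstruction; the entire content lies in setting up the simultaneous recursion correctly. The two points that really need attention are (i) running the induction so that, at each depth, conditions are settled before names — legitimate by the remark on depths above — and (ii) knowing that the antichains $A_n$, a priori only maximal in $\Por^*\frestr A$, are countable, which is exactly where one invokes the density of $\Por^*\frestr A$ in the ccc poset $\Por\frestr A$.
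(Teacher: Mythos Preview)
Your argument is correct and follows exactly the route the paper indicates: the paper simply states that the countability is ``easily proved by induction on $\mathrm{Dp}(A)$ for $A\subseteq L$'' without spelling out the details, and what you have written is precisely the natural unfolding of that induction. Your care about handling conditions before names at each depth, and about extracting countability of the $A_n$ via the density of $\Por^*\frestr A$ in the ccc poset $\Por\frestr A$, fills in the only nontrivial bookkeeping.
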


\begin{notation}\label{NotationPolish}
  Given a triple $\mathbf{t}=(H_S,H_C,\bar{W})$ where $H_S$ and $H_C$ are countable disjoint sets and $\bar{W}=\langle W_a\rangle_{a\in H_C}$ is a sequence of countable sets, define $\R(\mathbf{t})=(\omega^\omega)^{H_S}\times\prod_{a\in H_C}2^{W_a}$, which is clearly a Polish space. Additionally, for an arbitrary sequence $\bar{z}$ of functions, if $H_S\cup H_C\subseteq\dom(\bar{z})$, denote $\bar{z}\frestr\mathbf{t}=\langle z_a\rangle_{a\in H_S}\widehat{\ \ }\langle z_a\frestr W_a\rangle_{a\in H_C}$.
   
  Fix $A\subseteq L$, $p\in\Por^*\frestr A$ and $\dot{x}$ a $\Por^*\frestr A$-name for a real. For $p\in\Por^*\frestr A$, let $\mathbf{t}_p=(H(p)\menos C,H(p)\cap C,\bar{W}_p)$ and $\R(p):=\R(\mathbf{t}_p)$. Likewise, define $\mathbf{t}_{\dot{x}}$ and $\R(\dot{x})$.
  
  In particular, considering $\tilde{\eta}=\langle\dot{\eta}_z\rangle_{z\in L}$, $\tilde{\eta}\frestr\mathbf{t}_p$ and $\tilde{\eta}\frestr\mathbf{t}_{\dot{x}}$ are $\Por^*\frestr A$-names for reals in $\R(p)$ and $\R(\dot{x})$, respectively.
\end{notation}

We are now ready to state and prove the main result of this text.

\begin{theorem}\label{MainThm}
   Let $\Por\frestr\langle L,\bar{\Iwf}\rangle$ be a simple iteration as in Definition \ref{DefSimpleiteration}.
   \begin{enumerate}[(a)]
      \item There is a relation $\Ewf\subseteq\{(\bar{z},p)\ /\ p\in\Por^*\frestr L\textrm{\ and }\bar{z}\in\R(p)\}$ such that, for any $p\in\Por^*\frestr L$,
          \begin{enumerate}[(i)]
             \item $\Ewf(\cdot,p)$ is Borel in $\R(p)$ and
             \item $\Vdash_{\Por^*\upharpoonright L}p\in\dot{G}\sii\Ewf(\tilde{\eta}\frestr{\mathbf{t}_p},p)$.
          \end{enumerate}
      \item If $\dot{x}$ is a $\Por^*\frestr L$-name for a real, there exists a Borel function $F_{\dot{x}}:\R(\dot{x})\to\omega^\omega$ such that $\Vdash_{\Por^*\upharpoonright L}\dot{x}=F_{\dot{x}}(\tilde{\eta}\frestr\mathbf{t}_{\dot{x}})$.
   \end{enumerate}
\end{theorem}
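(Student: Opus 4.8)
The natural approach is a simultaneous induction on $\mathrm{Dp}(A)$, proving for every $A\subseteq L$ the relativized statements: (a$_A$) there is $\Ewf_A$ with $\Ewf_A(\cdot,p)$ Borel in $\R(p)$ and $\Vdash_{\Por^*\upharpoonright A}p\in\dot G\sii\Ewf_A(\tilde\eta\frestr\mathbf{t}_p,p)$ for $p\in\Por^*\frestr A$, and (b$_A$) the analogous Borel-function statement for $\Por^*\frestr A$-names of reals. By Lemma~\ref{HistoryWellDef} the data $H(p),\overline W(p)$ and hence $\R(p)$ do not depend on $A$, so the relations built at level $A$ will cohere and the final $\Ewf=\Ewf_L$, $F_{\dot x}=F_{\dot x}^L$ are what we want. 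The base case $A=\varnothing$ (equivalently $p=\langle\ \rangle$) is trivial: $\Ewf(\cdot,\langle\ \rangle)\equiv\top$.

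For the successor step of (a), take $p\neq\langle\ \rangle$, let $x=\max(\dom p)$, and pick $A'\in\Iwf_x\frestr A$ with $p\frestr L_x\in\Por^*\frestr A'$ and $p(x)$ a $\Por^*\frestr A'$-name for a condition in $\Qnm_x^{A'}$. Since $\Por^*\frestr A'$ is a complete suborder of $\Por^*\frestr A$ with $\Por^*\frestr A\cong(\Por^*\frestr A')\ast\Qnm_x^{A'}$ (restricted to $A$-conditions), membership ``$p\in\dot G$'' decomposes as ``$p\frestr L_x\in\dot G_{A'}$'' \emph{and} ``in $V[\dot G_{A'}]$, $p(x)$ lies in the $\Qnm_x^{A'}$-generic.'' The first conjunct is handled by the induction hypothesis (a$_{A'}$) applied to $p\frestr L_x$, giving a Borel condition on $\tilde\eta\frestr\mathbf{t}_{p\frestr L_x}$. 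For the second conjunct I split on the type of $x$:
\begin{itemize}
\item If $x\in B$, then $\Qnm_x^{A'}=\Sor_x^{V^{\Por\upharpoonright A'}}$ is (the model-restriction of) a Borel $\sigma$-linked poset, and $p(x)$ can be taken to \emph{be} a real (an actual condition in the Borel poset, after passing to a dense set as in Remark~\ref{RemonSmallposets}(3) when needed); Corollary~\ref{Nicesubposet} gives ``$p(x)\in\dot G_x\sii E_x(\dot\eta_x,p(x))$'', which is Borel in $\dot\eta_x$, and $x\in H(p)\menos C$ so $\dot\eta_x$ is a coordinate of $\tilde\eta\frestr\mathbf{t}_p$.
\item If $x\in R$ with $C_x\subseteq A'$, then $\Qnm_x$ is a name for a nice subposet of $\Sor_x^{V^{\Por\upharpoonright C_x}}$; again Corollary~\ref{Nicesubposet} (applied inside the appropriate model) yields a Borel characterization of genericity of $p(x)$ in terms of $\dot\eta_x$. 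Here one must also encode, via (a$_{C_x}$), that $p(x)$ is a genuine condition of the nice subposet — but since $p(x)$ was taken to be an honest condition (a real) this is part of the domain constraint, not a forced statement.
\item If $x\in C$ with $C_x\subseteq A'$, then $p(x)=\xi\in\gamma_x$ is an ordinal (we are in $\Por^*$), and ``$p(x)\in\dot G_x$'' is simply ``$\xi\in$ the generic subset of $\Qnm_x$'', i.e.\ $\dot\chi_x(\xi)=1$; since $\xi\in W(p)_x$ by construction, this is a coordinate-evaluation of the component $\dot\eta_x\frestr W(p)_x$ of $\tilde\eta\frestr\mathbf{t}_p$, again Borel.
\item If the relevant $C_x\nsubseteq A'$, the poset is trivial and the conjunct is vacuous.
\end{itemize}
Combining: $\Ewf_A(\bar z,p)$ is the conjunction of $\Ewf_{A'}(\bar z\frestr\mathbf{t}_{p\frestr L_x},p\frestr L_x)$ with the Borel clause for $p(x)$. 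The only subtlety is the pull-back of Borel sets along the projection $\R(p)\to\R(p\frestr L_x)$, which is continuous since $H(p\frestr L_x)\subseteq H(p)$ and $W(p\frestr L_x)_z\subseteq W(p)_z$, so $\Ewf_A(\cdot,p)$ is Borel in $\R(p)$. That the two sides are forced equivalent follows because each conjunct is, by an instance of Lemma~\ref{forceSimga1-2}/Corollary~\ref{Nicesubposet}-style absoluteness, correctly computed in $V[\dot G]$ from the generic reals.

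For part (b), let $\dot x=\langle h_n,A_n\rangle_{n<\omega}$ be a $\Por^*\frestr L$-name for a real, where (Notation~\ref{NotationNames}) each $A_n$ is a maximal antichain and $p\in A_n$ decides $\dot x(n)=h_n(p)$. By Lemma~\ref{CtbleHistory}, $H(\dot x)=\bigcup_{n,p\in A_n}H(p)$ and the $W(\dot x)_z$ are countable, so $\R(\dot x)$ is Polish and the projections $\R(\dot x)\to\R(p)$ (for $p\in\bigcup_nA_n$) are continuous. Define $F_{\dot x}(\bar z)(n)=h_n(p)$ for the unique $p\in A_n$ with $\Ewf(\bar z\frestr\mathbf{t}_p,p)$ — uniqueness holds on a comeager/measure-one-enough set, but more carefully: since $A_n$ is a maximal antichain, in any generic extension exactly one $p\in A_n$ lies in $\dot G$, and by part (a) that is exactly the $p$ with $\Ewf(\tilde\eta\frestr\mathbf{t}_p,p)$ true; so on the range of the name-evaluation the defining clause is unambiguous. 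To make $F_{\dot x}$ a genuine Borel \emph{function} on all of $\R(\dot x)$, set $F_{\dot x}(\bar z)(n)=h_n(p)$ for the least-index $p\in A_n$ (in a fixed enumeration) such that $\Ewf(\bar z\frestr\mathbf{t}_p,p)$ holds, and $0$ if no such $p$ exists; each set $\{\bar z:\Ewf(\bar z\frestr\mathbf{t}_p,p)\}$ is Borel in $\R(\dot x)$ by (a)(i) plus continuity of the projection, so $F_{\dot x}$ is Borel. Finally $\Vdash\dot x=F_{\dot x}(\tilde\eta\frestr\mathbf{t}_{\dot x})$: in $V[\dot G]$, for each $n$ the unique $p\in A_n\cap\dot G$ is the least $p$ with $\Ewf$ true (by (a)(ii) and maximality of the antichain, no earlier $p$ can have $\Ewf$ true since distinct conditions in an antichain are incompatible and only the generic one satisfies $\Ewf$), hence $F_{\dot x}(\tilde\eta\frestr\mathbf{t}_{\dot x})(n)=h_n(p)=\dot x(n)$.

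**Main obstacle.** The routine-looking but genuinely delicate point is the \emph{coherence of the template decomposition}: showing that $\Por^*\frestr A\cong(\Por^*\frestr A')\ast\Qnm_x^{A'}$ in a way compatible across different choices of $A'\in\Iwf_x\frestr A$, so that the inductively built $\Ewf_{A'}$ genuinely computes $p\frestr L_x$'s genericity \emph{inside} the larger extension. This is exactly where correctness-preservation of the $\Sor_x$ (Definition~\ref{DefCorrPres}) is used — it guarantees the relevant four-poset systems are correct, so reductions and hence generic-membership statements are absolute between $\Por^*\frestr A'$ and $\Por^*\frestr A$; without it the clause ``$p\frestr L_x\in\dot G_{A'}$'' computed by $\Ewf_{A'}$ need not agree with the true genericity over the bigger forcing. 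The second, more bookkeeping-heavy obstacle is verifying that all Borel sets and functions produced really live in the \emph{ground model} — i.e.\ that the codes depend only on the template, the antichains $A_n$, the functions $h_n$, the relations $E_x$, and the countable sets $H(\cdot),\overline W(\cdot)$, all of which are ground-model objects — and that the pull-back/projection operations among the various $\R(p)$ are performed coherently; this is where Lemma~\ref{HistoryWellDef} (independence of $H,\overline W$ from $A$) does the essential work of making the construction well-defined.
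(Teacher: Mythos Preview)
Your overall architecture---simultaneous induction on $\mathrm{Dp}(A)$, with part (b) derived from part (a) via the antichain decomposition of a name---matches the paper's, and your treatment of (b) and of the case $x\in C$ is essentially correct. But there is a genuine gap in your handling of $x\in B\cup R$.

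You assert that for $x\in B$ (and similarly $x\in R$) the coordinate $p(x)$ ``can be taken to \emph{be} a real (an actual condition in the Borel poset, after passing to a dense set as in Remark~\ref{RemonSmallposets}(3))''. This is false. By the definition of $\Por^*\frestr A$, only the coordinates $x\in C$ are required to be ground-model objects (ordinals); for $x\in B\cup R$, $p(x)$ is a $\Por^*\frestr A'$-\emph{name} for a real in $\Sor_x^{V^{\Por\upharpoonright A'}}$, and in general there is no dense set on which it becomes a ground-model real. Remark~\ref{RemonSmallposets}(3) does not say otherwise: for $\Sor_x=\Eor$ it only lets you decide the finite stem $s$ and the size $|\dot F|$, while $\dot F$ itself remains a name for a finite set of reals that may live only in the intermediate extension. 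Concretely, if an earlier stage adds a Cohen real $c$ and $\Sor_x$ is random forcing, then $p(x)$ could name a positive-measure set depending on $c$, and no ground-model condition need lie below it. Consequently your clause $E_x(\dot\eta_x,p(x))$ is not a Borel condition on $\tilde\eta\frestr\mathbf{t}_p$: the second argument is not a fixed parameter but a function of earlier generics.

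The paper repairs exactly this point by making the recursion between (a) and (b) \emph{mutual}: at stage $A$ with maximum $x\in B\cup R$, one first applies the already-constructed Borel function $F^{A_x}_{p(x)}$ (from part (b) at the earlier level $A_x$) to compute the real $p(x)$ from $\bar z\frestr\mathbf{t}_{p(x)}$, and only then feeds the result into $E_x$, setting
\[
\Ewf^A(\bar z,p)\ \sii\ \Ewf^{A_x}(\bar z\frestr\mathbf{t}_{p\upharpoonright L_x},p\frestr L_x)\ \wedge\ E_x\bigl(z_x,\,F^{A_x}_{p(x)}(\bar z\frestr\mathbf{t}_{p(x)})\bigr).
\]
This is also why Definition~\ref{DefHistory}(1)(i) includes $H^{A'}(p(x))$ in $H(p)$: those coordinates are needed precisely to reconstruct the value of $p(x)$ before testing membership in the $\Sor_x$-generic. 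Your proposal silently drops this term, which is the missing idea.

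A secondary remark: your ``successor step'' is organized around $x=\max(\dom p)$, whereas the induction is on $\mathrm{Dp}(A)$; the paper instead cases on the structure of $A$ (whether $A$ has a maximum, and whether $A\cap L_x\in\hat\Iwf_x$), and explicitly carries a coherence clause $\Ewf^K=\Ewf^A\frestr(\Por^*\frestr K)$ for $K\subseteq A$ through the induction. You identify this coherence as the ``main obstacle'', but without stating it as an inductive hypothesis your limit and non-$\hat\Iwf_x$ cases are not actually handled.
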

\begin{proof}
   By recursion on $\mathrm{Dp}(A)$ for $A\subseteq L$, we define a relation $\Ewf^A\subseteq\{(\bar{z},p)\ /\ p\in\Por^*\frestr A\textrm{\ and }\bar{z}\in\R(p)\}$ such that, for any $p\in\Por^*\frestr A$,
   \begin{enumerate}[(i)]
       \item $\Ewf^A(\cdot,p)$ is Borel in $\R(p)$,
       \item $\Vdash_{\Por^*\upharpoonright A}p\in\dot{G}\sii\Ewf^A(\tilde{\eta}\frestr{\mathbf{t}_p},p)$ and
       \item for all $K\subseteq A$, $q\in\Por^*\frestr K$ and $\bar{z}\in\R(q)$, $\Ewf^K(\bar{z},q)$ iff $\Ewf^A(\bar{z},q)$.
   \end{enumerate}
   Within this recursion, for any $\Por^*\frestr A$-name for a real $\dot{x}$, we construct a Borel function $F^A_{\dot{x}}:\R(\dot{x})\to\omega^\omega$ such that
   \begin{enumerate}[(i)]
      \setcounter{enumi}{3}
      \item $\Vdash_{\Por^*\upharpoonright A}\dot{x}=F^A_{\dot{x}}(\tilde{\eta}\frestr\mathbf{t}_{\dot{x}})$ and
      \item for all $K\subseteq A$ and $\dot{y}$ $\Por^*\frestr K$-name for a real, $F^K_{\dot{y}}=F^A_{\dot{y}}$.
   \end{enumerate}
   This implies that $\Ewf^L$ and $F_{\dot{x}}=F^L_{\dot{x}}$ is as we want for (a) and (b).
   We proceed with the construction by the following cases.
   \begin{enumerate}[(1)]
      \item \emph{$A$ has a maximum $x$ and $A_x=A\cap L_x\in\hat{\Iwf}_x$.} We consider cases on $x$.
         \begin{itemize}
            \item[(1.1)] If $x\in B\cup R$, $\Ewf^A(\bar{z},p)$ iff $p\in\Por^*\frestr A$, $\bar{z}\in\R(p)$ and, either $x\notin\dom p$ and $\Ewf^{A_x}(\bar{z},p)$, or $x\in\dom p$, $\Ewf^{A_x}(\bar{z}\frestr\mathbf{t}_{p\upharpoonright L_x},p\frestr L_x)$ and $E_x(z_x,F^{A_x}_{p(x)}(\bar{z}\frestr\mathbf{t}_{p(x)}))$. Note that, when $x\in R$ and $p(x)$ is the trivial condition, $H(p(x))=\varnothing$ and $F^{A_x}_{p(x)}(\bar{z}\frestr\mathbf{t}_{p(x)})=\mathds{1}_{\Sor_x}$, so $E_x(z_x,F^{A_x}_{p(x)}(\bar{z}\frestr\mathbf{t}_{p(x)}))$ is true.
            \item[(1.2)] If $x\in C$, $\Ewf^A(\bar{z},p)$ iff $p\in\Por^*\frestr A$, $\bar{z}\in\R(p)$ and, either $x\notin\dom p$ and $\Ewf^{A_x}(\bar{z},p)$, or $x\in\dom p$, $\Ewf^{A_x}(\bar{z}\frestr\mathbf{t}_{p\upharpoonright L_x},p\frestr L_x)$ and $z_x(p(x))=1$.
         \end{itemize}
      \item \emph{$A$ has a maximum $x$ but $A_x\notin\hat{\Iwf}_x$.} $\Ewf^A(\bar{z},p)$ iff there is an $A'\subseteq A$ such that $A'\cap L_x\in\Iwf_x\frestr A$, $p\in\Por^*\frestr A'$, $\bar{z}\in\R(p)$ and $\Ewf^{A'}(\bar{z},p)$. By (iii), this is equivalent to say that, for any $A'\subseteq A$, if $A'\cap L_x\in\Iwf_x\frestr A$, $p\in\Por^*\frestr A'$ and $\bar{z}\in\R(p)$, then $\Ewf^{A'}(\bar{z},p)$.
      \item \emph{$A$ does not have a maximum.} Two cases
          \begin{itemize}
             \item[(3.1)] \emph{$A=\varnothing$.} $\Ewf^A(\bar{z},p)$ iff $p=\langle\ \rangle$ and $\bar{z}\in\R(p)=\{\langle\ \rangle\}$.
             \item[(3.2)] \emph{$A\neq\varnothing$.} $\Ewf^A(\bar{z},p)$ iff there are $x\in A$ and $A'\in\Iwf_x\frestr A$ such that $p\in\Por^*\frestr A'$, $\bar{z}\in\R(p)$ and $\Ewf^{A'}(\bar{z},p)$. By (iii), this is equivalent to say that, for any $x\in A$ and $A'\in\Iwf_x\frestr A$, if $p\in\Por^*\frestr A'$ and $\bar{z}\in\R(p)$ then $\Ewf^{A'}(\bar{z},p)$.
          \end{itemize}
   \end{enumerate}
   (i), (ii) and (iii) can be checked by simple calculations. We only show one case of (iii) to give an idea of how to proceed. Assume that $A$ is as in case (2) and $K\subseteq A$, also as in case (2), where $y=\max(K)\leq x$. Let $p\in\Por^*\frestr K$. We can find $A'\subseteq A$ and $K'\subseteq K\cap A'$ such that $K'\cap L_y\in\Iwf_y\frestr K$, $A'\cap L_x\in\Iwf_x\frestr A$ and $p\in\Por^*\frestr K'$ so, for $\bar{z}\in\R(p)$, $\Ewf^K(\bar{z},p)$ iff $\Ewf^{K'}(\bar{z},p)$ (by (2)) iff $\Ewf^{A'}(\bar{z},p)$ (by induction hypothesis) iff $\Ewf^A(\bar{z},p)$ (by (2)).
   
   Let $\dot{x}=\langle h_n,A_n\rangle_{n<\omega}$ as in Notation \ref{NotationNames}, where $A_n=\{p_{n,k}\ /\ k<\beta_n\}$ is a maximal antichain with $\beta_n=|A_n|$. Define $D\subseteq\R(\dot{x})$ such that $\bar{z}\in D$ iff, for all $n<\omega$, there is a unique $k<\beta_n$ such that $\Ewf^A(\bar{z}\frestr\mathbf{t}_{p_{n,k}},p_{n,k})$. Clearly, by (i), $D$ is Borel. Let $F:D\to\omega^\omega$ defined as $F(\bar{z})(n)=h_n(p_{n,k})$ where $k<\beta_n$ is the unique such that $\Ewf^A(\bar{z}\frestr\mathbf{t}_{p_{n,k}},p_{n,k})$. It is easy to see that $\Vdash_{\Por^*\upharpoonright A}``\tilde{\eta}\frestr\mathbf{t}_{\dot{x}}\in D\textrm{\ and }\dot{x}=F(\tilde{\eta}\frestr\mathbf{t}_{\dot{x}})"$ by (ii). In a trivial way, we can extend $F$ to a Borel function $F^A_{\dot{x}}$ with domain $\R(\dot{x})$. (v) is an immediate consequence of (iii).
\end{proof}

{\small

% BibTeX users please use one of
%\bibliographystyle{spbasic}      % basic style, author-year citations
\bibliographystyle{spmpsci}      % mathematics and physical sciences
%\bibliographystyle{spphys}       % APS-like style for physics
%\bibliography{}   % name your BibTeX data base

\begin{thebibliography}{BML}

\bibitem[B91]{Br-Cichon}
J. Brendle:
\emph{Larger cardinals in Cicho\'n's diagram,}
J. Symb. Logic 56, no. 3 (1991) 795-810.

\bibitem[B02]{Br-TempIt}
J. Brendle:
\emph{Mad families and iteration theory.}
In: Logic and Algebra, Y. Zhang (ed.), Contemp. Math. 302, Amer. Math. Soc., Providence, RI, 2002, pp. 1-31.

\bibitem[B03]{Br-CtbleCof}
J. Brendle:
\emph{The almost disjointness number may have countable cofinality.}
Trans. Amer. Math. Soc. 355 (2003) 2633-2649.


\bibitem[B05]{Br-Luminy}
J. Brendle:
\emph{Templates and iterations, Luminy 2002 lecture notes.}
Ky\={o}to daigaku s\={u}rikaiseki kenky\={u}sho k\={o}ky\={u}roku (2005) 1-12.

\bibitem[B]{Br-Shat}
J. Brendle:
\emph{Shattered iterations.}
In preparation.

\bibitem[FT]{MaxCofGr}
V. Fischer, A. T\"ornquist:
\emph{Template iterations and maximal cofinitary groups.}
Fund. Math., to appear.

\bibitem[FM]{Fischer-Mejia}
V. Fischer, D.A. Mej\'ia:
\emph{Splitting, bounding and almost disjointness can be quite different.}
In preparation.

\bibitem[GMS]{Left Cichon}
M. Goldstern, D. A. Mej\'ia, S. Shelah:
\emph{The left hand side of Cicho\'n's diagram.}
In preparation.

\bibitem[JS90]{JuSh-KunenMillerchart}
H. Judah, S. Shelah:
\emph{The Kunen-Miller chart (Lebesgue measure, the Baire property, Laver reals and preservation theorems for forcing).}
J. Symb. Logic 55, no. 3 (1990) 909-927.


\bibitem[M13a]{Me-MatIt}
D. A. Mej\'ia:
\emph{Matrix iterations and Cicho\'n's diagram,}
Arch. Math. Logic 52 (2013) 261-278.

\bibitem[M13b]{Me-Matit02}
D. A. Mej\'ia:
\emph{Models of some cardinal invariants with large continuum.}
Ky\={o}to daigaku s\={u}rikaiseki kenky\={u}sho k\={o}ky\={u}roku (2013) 36-48.

\bibitem[M]{Me-TempIt}
D. A. Mej\'ia:
\emph{Template iterations with non-definable ccc forcing notions.}
Submitted.

\bibitem[S04]{Sh-TempIt}
S. Shelah:
\emph{Two cardinal invariants of the continuum ($\dfrak<\afrak$) and FS linearly ordered iterated forcing.}
Acta Math. 192 (2004) 187-223 (publication number 700).


\end{thebibliography}

% Non-BibTeX users please use

}
\end{document}